\newtheorem{theorem}{Theorem}[section]
\newtheorem{proposition}[theorem]{Proposition}
\newtheorem{lemma}[theorem]{Lemma}
\newtheorem{definition}[theorem]{Definition}
\date{\today}
\begin{document}

\title[Stack words and a bound for 3-stack sortable permutations]{Stack words and a bound
for 3-stack sortable permutations}

\author{Mikl\'os B\'ona}
\address{Department of Mathematics, University of Florida, $358$ Little Hall, PO Box $118105$,
Gainesville, FL, $32611-8105$ (USA)}
\email{bona@ufl.edu}

\begin{abstract} We use stack words to find a new, simple  proof  for the best known upper bound for the number of 3-stack sortable permutations
of a given length. This is the first time that stack words are used to obtain such a result. 
\end{abstract}

\maketitle

\section{Introduction}
Let $p=p_1p_2\cdots p_n$ be a permutation. 
In order to stack sort $p$, we consider
its entries one by one. First, we 
take $p_1$,  and put it in a one-dimensional sorting device,  {\em the  stack}.
The stack stands vertically, and is open at the top; it can hold entries that are increasing from the top 
of the stack to the bottom of the stack.  Second, we  
take $p_2$. If $p_2<p_1$, then
it is allowed for $p_2$ to go in the stack on top of $p_1$, so we  put $p_2$
 there. If $p_2>p_1$, however, then first we take $p_1$ out of the stack,
 and put it
to the first position of the output permutation, and {\em then} we put
$p_2$ into the stack.
We continue this way: at step $i$, we compare $p_i$ with the entry
$r$
 currently on the top of the stack. If $p_i<r$, then $p_i$ goes on the top
of the stack; if not, then $r$ goes to the next empty position of
the output permutation, and $p_i$ gets compared to the new entry that is
now
at the top of the stack. The algorithm ends when all $n$ entries passed
through the stack and are in the output permutation $s(p)$. See Section 8.2 of
\cite{BonCo} for a survey on stack sortable permutations. 
  
\begin{definition} \label{stacksortdef1}
If the output permutation $s(p)$ defined by the above algorithm is the
identity permutation $123\cdots n$, then we say that $p$ is {\em stack
sortable}. 
\end{definition}

It is well-known that a permutation $p$ is stack sortable if and only if it avoids the pattern
231, hence the number of stack sortable permutations of length $n$ is the Catalan number
$C_n={2n \choose n}/(n+1)$.

A permutation is called $t$-stack sortable if sending it through the stack $t$ times results in the identity permutation.
In other words, $p$ is $t$-stack sortable if $s^t(p)$ is the identity permutation. Enumerating $t$-stack sortable permutations
for $t>1$ is extremely difficult.  Let $W_t(n)$ be the number of $t$-stack sortable permutations of length $n$. 
 For $t=2$, the following formula was conjectured by  West in \cite{WES}.  The formula turned out to be quite difficult to prove, but now has 
several complicated proofs. See  \cite{ZEI} , \cite{JAC}, \cite{GOU} or \cite{DEF1} for various proofs. 

\begin{theorem} \label{t:2stack} For all positive integers $n$, the number of 2-stack sortable permutations of length $n$
is given by
\[W_2(n) = \frac{2}{(n+1)(2n+1)} {3n \choose n}.\]
\end{theorem}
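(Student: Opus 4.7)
The plan is to parameterize each permutation $p$ of length $n$ by its \emph{stack word} $w(p)$, defined as the sequence of $n$ push (P) and $n$ pop (X) operations performed during one pass of stack sorting. By construction $w(p)$ is a Dyck word of length $2n$, and each P in $w(p)$ can be matched to a later X (the pop that eventually removes the entry pushed at that P); this is the usual non-crossing matching of the Dyck word. Because the sort is deterministic, the pair $(w(p), s(p))$ recovers $p$ uniquely: if the $i$-th P in $w(p)$ is matched to the $j$-th X, then $p_i$ equals the $j$-th entry of $s(p)$. Hence the enumeration of $2$-stack-sortable permutations of length $n$ reduces to counting pairs $(w,q)$ where $q$ is a $231$-avoiding permutation of length $n$ and $w$ is a Dyck word of length $2n$ that is \emph{compatible} with $q$, in the sense that some (necessarily unique) permutation $p$ satisfies $w(p)=w$ and $s(p)=q$.

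Next I would characterize the compatible pairs $(w,q)$ by a local condition. For the reconstructed $p$ to actually sort to $q$ via the schedule $w$, between consecutive P's of $w$ the entries popped must all be smaller than the entry about to be pushed, and the remaining stack top (if any) must be larger than it. Translating these two requirements into explicit combinatorial data on $w$ and $q$---for instance by relating the descents of $q$ to the peak and valley pattern of $w$---is where the real work lies.

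With such a characterization in hand, the last step is to enumerate the compatible pairs. One route is to set up a bijection with a known family counted by $\frac{2}{(n+1)(2n+1)}\binom{3n}{n}$, such as rooted non-separable planar maps with $n+1$ edges or suitable rooted ternary trees. An alternative route is to derive a functional equation for a bivariate generating function refined by a natural statistic of $w$ (for example, the number of returns of $w$ to the $x$-axis) and extract the closed form by the kernel method or Lagrange inversion.

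The main obstacle is the characterization step. The stack-sort map $s$ has intricate fibers that have resisted transparent description, which is precisely why West's conjecture has several complicated proofs rather than a short one. I expect any proof via stack words to have to confront this fiber structure; the hope is that the stack-word framework encodes the relevant data more cleanly than arguments based purely on pattern avoidance or on planar maps.
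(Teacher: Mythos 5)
There is a genuine gap: what you have written is a plan for a proof, not a proof. You correctly set up the reduction --- a 2-stack sortable $p$ is determined by the pair $(w(p), s(p))$, and $s(p)$ must be $231$-avoiding --- but then both essential steps are deferred rather than carried out. The characterization of which pairs $(w,q)$ are compatible is labeled ``where the real work lies'' and never stated; the enumeration is offered as a choice between a bijection to maps or ternary trees (not constructed) and a functional equation (not written down, let alone solved). No point in the argument produces the number $\frac{2}{(n+1)(2n+1)}\binom{3n}{n}$, and you acknowledge in your last paragraph that the main obstacle remains unresolved. A proof attempt that explicitly declares its central step an open obstacle is not a proof.

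For context: the paper itself does not prove this theorem either. It is West's conjecture, stated with citations to Zeilberger, Jacquard--Schaeffer, Goulden--West, and Defant for the known (and, as the paper says, complicated) proofs; the paper only \emph{uses} the formula and its descent refinement $W_2(n,k-1)$ as inputs to the $3$-stack bound. So there is no in-paper argument your sketch could be matching. If you want to complete your approach, you would in effect have to reconstruct one of the existing proofs: for instance, Zeilberger's generating-function/kernel argument or the Goulden--West bijection with rooted nonseparable planar maps. The stack-word framing you describe is closest in spirit to Defant's fertility-based approach, where the hard content is precisely the description of the fibers of $s$ over $231$-avoiding permutations --- that description (via ``valid hook configurations'' or an equivalent device) is the missing characterization step, and it is substantial.
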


There is no formula known for $W_t(n)$ if $t\geq 3$. Even good upper bounds are difficult to find. An easy upper bound
is given by the inequality $W_t(n)\leq (t+1)^{2n} $, which follows from the fact that if $p$ is $t$-stack sortable, then 
$p$ avoids the pattern $23\cdots n1$. Characterizing $t$-stack sortable permutations is very complicated if $t>2$, though
the case of $t=3$ has been done by Claesson and \'Ulfarsson \cite{CLA}, and by  \'Ulfarsson \cite{ULF}. For the case
of general $t$, a less concrete description was recently obtained by Albert, Bouvel and F\'eray in \cite{ABF}. 

Colin Defant \cite{DEF} proved upper bounds for $\lim_{n\rightarrow \infty} \sqrt[n]{W_t(n)}$ for the cases of 
$t=3$ and $t=4$ that are better than the $(t+1)^2$ bound mentioned above. For $t=3$, he proved the upper bound
$\lim_{n\rightarrow \infty} \sqrt[n]{W_3(n)}\leq 12.53296$, and for $t=4$, he proved the upper bound
$\lim_{n\rightarrow \infty} \sqrt[n]{W_3(n)}\leq 21.97225$. Results on related lower bounds can be found in Defant's 
paper \cite{DEF1}, where it is shown that $\lim_{n\rightarrow \infty} \sqrt[n]{W_3(n)} \geq 8.659702$ and that
 $\lim_{n\rightarrow \infty} \sqrt[n]{W_t(n)} \geq (\sqrt{t}+1)^2 $, along with a new proof for the formula for $W_2(n)$,
and a polynomial time algorithm to compute the numbers $W_3(n)$.  

In this paper, we provide a new, simpler proof for Defant's upper bound for the case of $t=3$. Another feauture of our
proof is that it uses stack words, that have been long known to describe $t$-stack sortable permutations, but never 
used to prove an upper bound for them. 

\section{Our method}
 Consider the following modification of the $t$-stack sorting
operation. Instead of passing a permutation through a stack $t$ times,
we pass it 
through $t$  stacks placed next to each other in series
 as follows. The first stack operates
as the usual stack except that when an entry $x$ leaves it, it does not go 
to the output right away. It goes to the next stack if $x<j$, where $j$ is
the entry on the top of the next stack, or if the next stack is empty. If $j<x$, then $x$ cannot move until
$j$ does.

The general step of this algorithm is as follows. Let $S_1,S_2,\cdots
 ,S_t$ be
the $t$ stacks, with $a_i$ being the entry on top of stack $S_i$. If the next
entry $x$ of the input is smaller than $a_1$, we put $x$ on top of 
$S_1$. Otherwise, we find the smallest $i$ so that $a_i$ can move to the next
stack (that is, that $a_{i}<a_{i+1}$ or 
$S_{i+1}$ is empty), and move
$a_i$ on top of $S_{i+1}$. If we do not find such $i$, or if $S_1,S_2,\cdots 
,S_{t-1}$ and the input have all been  emptied out,  then we put the entry on 
the top of $S_t$ into the output.

We can describe the movement of the entries of the input permutation $p$ through the stack by {\em stack words}. 
If $t=1$, then there are just two kinds of moves, an entry either moves in the stack or out of the stack. Let us denote these steps with letters $A$ and $B$, respectively. Then the movement of all entries of $p$ is described by a stack word
consisting of $n$ copies of $A$ and $n$ copies of $B$ in which for all $i$, the $i$th $A$ precedes the $i$th $B$. 
The number of such words is well-known to be $C_n={2n \choose n}/(n+1)$. On the other hand, if $p$ is stack sortable,
then its output is the identity, so given the stack word of $p$, we can uniquely recover $p$. It is easy to prove by
strong induction that each word that satisfies the conditions described in this paragraph is indeed the stack word of 
a stack sortable permutation, so this is a stack word
proof of the fact that $W_1(n)=C_n$. 

In general, if there are $t$ stacks, then there are $t+1$ different kinds of moves. Therefore,  the movement of $p$
through the $t+1$ stacks can be described by a word of length $(t+1)n$ that consists of $n$ copies of each of $t+1$
different letters.  

In particular, if $t=2$, then there are three kind of moves, and if $t=3$, the case that is the subject of our paper, then there are four kinds of moves, which we will treat as follows. 
Let $A$ denote the move of an entry from the input to the first stack, let $B$ denote the move of an entry from the first stack to the second stack, let $C$ denote the move of an entry from the second stack to the third stack, and let $D$ denote the move of an entry from the third stack to the output. 

Note that we will also call stack words 3-stack words or 2-stack words when we want to emphasize the number of stacks that are used to sort a given word. Also note, for future reference, that for all $t$, the descents of $p$ are in bijective
correspondence with the $AA$ factors of $p$. (An $XY$ factor of a word is just a letter $X$ immediately followed by a letter $Y$.)

We will identify 3-stack sortable permutations with their stack words. We can do that since if $p$ is 3-stack sortable, 
then its image under the 3-stack sorting algorithm is the identity permutation, so given the stack word of $p$, 
we can uniquely recover $p$. 

\begin{proposition} \label{ruleset1}
Let $w$ be a 3-stack word of a permutation. Then all of the following hold. 
\begin{enumerate}
\item There is no $BB$ factor in $w$.
\item There is no $CC$ factor in $w$.
\item There is no $BAB$ factor in $w$. 
\item There is no $CBA^jC$ factor in $w$, where $j\geq 0$. 
\end{enumerate}
\end{proposition}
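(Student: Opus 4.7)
The plan is to prove each of the four rules by contradiction, invoking two structural features of the algorithm described in Section 2: the stack invariant (entries on any stack $S_i$ strictly increase from top to bottom) and the deterministic priority of the algorithm (push from the input whenever possible; otherwise perform the smallest-index legal stack-to-stack move; otherwise output from $S_t$). Since every move is forced by the current state, it suffices in each case to track how a handful of stack tops evolve during the alleged forbidden factor.

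Rules (1) and (2) follow directly from the stack invariant. If a $B$ move transfers some entry $a$ from $S_1$ to $S_2$, then the new top of $S_1$ is some $a'>a$ while the new top of $S_2$ is $a$; a second $B$ would demand $a'<a$, which is impossible. The argument for $CC$ is identical with the stacks $S_2,S_3$ in place of $S_1,S_2$.

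For rule (3) I will invoke input-priority. Let $a$ denote the entry moved by the first $B$ and let $y$ be the next input letter (which must exist, since an $A$ is to follow). Because the algorithm chose $B$ rather than push $y$, we must have $y\ge a$. After the $B$ move, the top of $S_2$ is $a$; after the subsequent $A$, the top of $S_1$ is $y$. A second $B$ would then require $y<a$, contradicting $y\ge a$. The only subtlety is excluding the case where the input is empty at the first $B$, and this is automatic since that would leave no entry for the intermediate $A$ to push.

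Rule (4) is where I expect the main obstacle, as it combines both tools and involves an arbitrary middle block $A^{j}$. Let $a_1,a_2,a_3$ be the tops of $S_1,S_2,S_3$ immediately before the leading $C$. The fact that the algorithm chose $C$ rather than $B$ forces $a_1\ge a_2$, since $B$ has strictly smaller index. After the prefix $CB$, the top of $S_2$ is $a_1$ and the top of $S_3$ is $a_2$. The critical observation is that every letter in the $A^{j}$ block only pushes onto $S_1$ and hence leaves the tops of $S_2$ and $S_3$ unchanged; therefore, just before the closing $C$, the top of $S_2$ is still $a_1$ and the top of $S_3$ is still $a_2$. But then the closing $C$ would require $a_1<a_2$, contradicting $a_1\ge a_2$. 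The only bookkeeping needed is verifying that the boundary case $j=0$, as well as the case where $S_2$ becomes empty immediately after the first $C$, fit the same skeleton; in both subcases $a_2$ remains the top of $S_3$ throughout, and the same contradiction arises.
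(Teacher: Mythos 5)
Your proof is correct and follows the same idea as the paper's one-sentence proof: track the relevant stack tops and derive a violation of the invariant that each stack increases from top to bottom. You are in fact more careful than the paper, which attributes all four rules solely to that invariant, whereas (as you correctly observe) rules (3) and (4) also require the algorithm's greedy priority ($A$ over $B$, and $B$ over $C$) to establish the inequalities $y\ge a$ and $a_1\ge a_2$ that the invariant then contradicts.
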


\begin{proof}
Each of these statements holds because otherwise the entries in the second or third stack would not be increasing 
from the top of the stack to the bottom of the stack. 
\end{proof}

\begin{proposition} \label{ruleset2}
Let $w$ be a 3-stack word of a permutation. Then all of the following hold.
\begin{enumerate}
\item There is no $DA$ factor in $w$. 
\item There is no $DB$ factor in $w$.
\item There is no $CA$ factor in $w$.
\end{enumerate}
\end{proposition}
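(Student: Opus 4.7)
The plan is to argue that each forbidden factor contradicts the deterministic priority rule of the 3-stack algorithm. Reading the description in Section 2, the algorithm prefers moves in the order $A$ (if $x < a_1$), then the smallest-$i$ promotion $B$ or $C$, and resorts to $D$ only when nothing else is possible. Hence, whenever the algorithm performs a move $X$, each higher-priority move was infeasible at that instant. The uniform idea of the proof is that performing $X$ does not disturb the portions of the state that determine whether those higher-priority moves remain infeasible immediately afterwards, and so the forbidden second letter cannot appear.

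For statement (1), suppose the algorithm writes a $D$ at some step $k$. Either $S_1, S_2$ and the input are all empty, or else moves $A$, $B$, $C$ were all infeasible. In the first case the input is still empty at step $k+1$, so $A$ cannot occur. In the second case, infeasibility of $A$ means the input is empty or the next letter $x$ satisfies $x \geq a_1$; since a $D$ move only pops the top of $S_3$, the input queue and $S_1$ are unchanged at step $k+1$, so the same values of $x$ and $a_1$ still forbid $A$. This rules out $DA$.

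Statement (2) is parallel. Infeasibility of $B$ at the time of the $D$ move means either $S_1$ is empty or $a_1 \geq a_2$ with $S_2$ nonempty (and in the "everything empty" edge case $S_1$ is empty outright). The $D$ move leaves $S_1$ and $S_2$ untouched, so the same relation between $a_1$ and $a_2$ persists into step $k+1$, ruling out $DB$. For statement (3), when the algorithm performs $C$, move $A$ was infeasible; the $C$ move only alters $S_2$ and $S_3$, leaving the input queue and $S_1$ unchanged, so $A$ remains infeasible at the next step and $CA$ is impossible.

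There is no genuine obstacle; the only care required is bookkeeping the edge case in which $D$ is triggered because $S_1, S_2$ and the input are already empty, but as noted above this case only makes the conclusions easier. The whole argument can be stated uniformly as: the letters $C$ and $D$ never modify the portion of the configuration that witnessed the infeasibility of the higher-priority letter, so that infeasibility is inherited by the succeeding step.
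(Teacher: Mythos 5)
Your proposal is correct and is essentially the paper's own argument: both rely on the greediness of the algorithm and the observation that a $D$ (resp.\ $C$) move leaves unchanged the parts of the configuration that witnessed the infeasibility of the higher-priority move, so that move remains infeasible immediately afterwards. You merely spell out the three cases that the paper dismisses as ``analogous.''
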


\begin{proof} Each of these statements holds because of the greediness of our algorithm. For instance, a $D$ cannot be followed by an $A$, since the move corresponding to $D$ did not change the content of the first stack, so if the $A$ move
was possible after the $D$ move, it was possible before the $D$ move, and therefore, {\em it would have been made}
before the $D$ move. Analogous considerations imply the other two statements.
\end{proof}

Note that the conditions given in Propositions \ref{ruleset1} and \ref{ruleset2} are necessary, that is, they must hold
in 3-stack words of all permutations, but they are {\em not} sufficient. In other words, if a word satisfies all these conditions, it is not necessarily the 3-stack word of a permutation.

Let $w$ be a 3-stack word of a {\em 3-stack sortable} permutation $p$, and let $v=v(w)$ be the subword of $w$ that consists of the letters $B$, $C$ and $D$ in $w$. In other words, $v=v(w)$ is the word obtained from $w$ by removing all copies
of the letter $A$. This can create $BB$ factors in $v$, even though there were no $BB$ factors in $w$. 

Note that $v$ describes how the stack sorted image $s(p)$ of $p$ traverses the second and third stacks. Note 
that as $p$ is 3-stack sortable, $s(p)$ is 2-stack sortable. So $v$ is the {\em 2-stack word} of the 2-stack sortable permutation
$s(p)$ over the alphabet $\{B,C,D\}$. Therefore, there are $W_2(n)=\frac{2}{(n+1)(2n+1}{3n\choose n}$ 
possible choices for $v$. 

Furthermore,  every descent of $s(p)$ bijectively  corresponds to a $BB$-factor of $v$. The number of 
2-stack sortable permutations of length $n$ with $k-1$ descents is known (see Problem Plus 8.1 in the book \cite{BonCo}) to be
\begin{equation} \label{descentform} W_2(n,k-1)=\frac{(n+k-1)!(2n-k)!}{k!(n+1-k)!(2k-1)!(2n-2k+1)!} .
\end{equation}

\section{Computing the upper bound}
\begin{lemma} \label{sumlemma} The number $W_3(n)$ of 3-stack sortable permutations of length $n$ satisfies the inequality
\[W_3(n) \leq \sum_{k=1}^{(n+1)/2} \frac{(n+k-1)!(2n-k)!}{k!(n+1-k)!(2k-1)!(2n-2k+1)!}  \cdot {2n-2k \choose n-1}.\]
\end{lemma}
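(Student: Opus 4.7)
The plan is to use the decomposition $w \leftrightarrow (v, \text{positions of the } A\text{'s})$ already introduced in the paper. For each fixed $v$ I bound the number of $A$-insertions that could come from a valid 3-stack word, and then sum over $v$.

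The first factor is easy: the paper has already observed that $v$ is the 2-stack word of the 2-stack sortable permutation $s(p)$, and that $BB$-factors of $v$ correspond to descents of $s(p)$. Hence the number of $v$'s with exactly $k-1$ $BB$-factors is $W_2(n,k-1)$, which is (\ref{descentform}).

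For the second factor, fix $v$ with $k-1$ $BB$-factors and count placements of $n$ copies of $A$ into $v$. Proposition~\ref{ruleset2} forbids any $A$ right after a $C$ or a $D$, so $A$'s can only sit in the initial gap or in the $n$ gaps immediately after the $B$'s of $v$, a total of $n+1$ candidate gaps. Proposition~\ref{ruleset1} parts (1) and (3) force each of the $k-1$ gaps inside a $BB$-factor of $v$ to contain at least two $A$'s, and the fact that $w$ must begin with $A$ forces the initial gap to contain at least one $A$. The final restriction, and the step I expect to require the most care, is that the gap after the \emph{last} $B$ of $v$ must be empty: this is a conservation argument rather than a forbidden-factor argument, namely that each $A$ adds an entry to $S_1$ and each $B$ removes one, so the $i$-th $B$ in $w$ must be preceded by at least $i$ copies of $A$, and with $i=n$ this uses up all available $A$'s before the final $B$. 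Having ruled this last gap out, we are left with $n$ gaps; one of them must hold $\geq 1$, $k-1$ of them must each hold $\geq 2$, and they must sum to $n$. Stars and bars, after subtracting the $1 + 2(k-1)$ forced $A$'s, yields exactly $\binom{2n-2k}{n-1}$ placements.

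Multiplying the two factors and summing over $k$ (which can only run up to $(n+1)/2$, beyond which $\binom{2n-2k}{n-1}$ vanishes) gives the stated inequality.
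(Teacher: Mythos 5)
Your proposal is correct and follows essentially the same route as the paper: bound the choices of $v$ by $W_2(n,k-1)$, identify the $n$ admissible gaps for the $A$'s, force the initial $A$ and the two $A$'s inside each $BB$-factor, and finish with stars and bars. The only difference is that you supply an explicit conservation argument for why the gap after the last $B$ is empty, a point the paper asserts without justification, so your version is if anything slightly more complete.
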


\begin{proof}
Let us count all such permutations with respect to the number of descents of their stack sorted image $s(p)$. 
If $s(p)$ has $k-1$ descents,  then its 2-stack word $v$ has $k-1$ factors $BB$. In order to recover the 3-stack word
$w$ of $p$, we must insert $n$ copies of $A$ into $v$ so that we get a valid 3-stack word. As $BB$ factors and $BAB$
factors are not allowed in $w$, we must insert two copies of $A$ into the middle of every $BB$ factor, and we also have to
put one A in front of the first $B$. We have $n-2(k-1)-1=n-2k+1$ copies of $A$ left. We can insert these only in $n$ possible slots, namely on the left of the first $B$, and immediately following any $B$ except the last one. (This is because
Proposition \ref{ruleset2} tells us that there are no $CA$ or $DA$ factors in $w$.) Therefore, by a classic balls-and-boxes
argument, the number of ways to place all copies of $A$ is at most
\[{n-2k+1+n-1\choose n-1 } = {2n-2k\choose n-1}.\] As there are $W_2(n,k-1)$ choices for $v$, the proof is complete
by summing over all possible values of $k$. 
\end{proof}

\begin{theorem} The inequality 
\[ \lim_{n\rightarrow \infty} \sqrt[n]{W_3(n)} \leq 12.53296 \]
holds. 
\end{theorem}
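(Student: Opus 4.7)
The plan is to feed Lemma \ref{sumlemma} into a standard Laplace-type estimate. Write the sum as $W_3(n)\le \sum_{k=1}^{(n+1)/2} T(n,k)$ where $T(n,k)$ is the summand. Since the number of terms is $O(n)$ and polynomial prefactors disappear under the $n$th root, it suffices to show that
\[
\limsup_{n\to\infty}\max_{1\le k\le (n+1)/2}\sqrt[n]{T(n,k)}\le 12.53296.
\]
So the whole problem collapses to maximizing a single explicit exponential-growth function in one variable.

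The second step is to set $k=\alpha n$ with $\alpha\in(0,1/2]$ and apply Stirling's formula $\log(m!)=m\log m-m+O(\log m)$ to every factorial appearing in $T(n,\alpha n)$ (including those hidden inside the binomial coefficient $\binom{2n-2k}{n-1}=\frac{(2n-2k)!}{(n-1)!(n-2k+1)!}$). A straightforward bookkeeping check shows that all coefficients of $\log n$ cancel (as they must, since $T(n,k)$ grows exponentially rather than super-exponentially), leaving
\[
\tfrac{1}{n}\log T(n,\alpha n)\longrightarrow f(\alpha),
\]
where
\[
f(\alpha)=(1+\alpha)\log(1+\alpha)+(2-\alpha)\log(2-\alpha)-\alpha\log\alpha-(1-\alpha)\log(1-\alpha)-2\alpha\log(2\alpha)-(1-2\alpha)\log(1-2\alpha).
\]
Thus $\limsup\sqrt[n]{W_3(n)}\le \max_{\alpha\in[0,1/2]}e^{f(\alpha)}$.

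The final step is the one-variable optimization. Differentiating term by term, the first-order condition $f'(\alpha)=0$ reduces to
\[
(1-\alpha^2)(1-2\alpha)^2=4\alpha^3(2-\alpha),
\]
which has a unique root $\alpha^*$ in $(0,1/2)$; rough bracketing already places $\alpha^*$ near $0.288$. The main obstacle, and essentially the only nontrivial work, is certifying to enough decimal places that $e^{f(\alpha^*)}\le 12.53296$: one should solve the polynomial equation above numerically to high precision, substitute into $f$, exponentiate, and confirm the inequality (perhaps with a short interval-arithmetic or monotonicity argument so that the bound is rigorous rather than merely numerical). Once this value is confirmed, the theorem follows, because any single term $T(n,k)$ is bounded by $e^{nf(\alpha)+o(n)}$ uniformly in $k$, and the sum carries at most $(n+1)/2$ such terms.
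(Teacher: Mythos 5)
Your proposal is correct and follows essentially the same route as the paper: bound the sum by its largest term (up to the polynomially many summands), set $k=\alpha n$, apply Stirling's formula to get the limit function (your $e^{f(\alpha)}$ agrees with the paper's $g(x)$ after the $(2-2x)^{2-2x}$ factors cancel), and maximize over $\alpha$; your first-order condition $(1-\alpha^2)(1-2\alpha)^2=4\alpha^3(2-\alpha)$ simplifies to the cubic $4\alpha^3-3\alpha^2+4\alpha-1=0$ whose root $\alpha^*\approx 0.28839$ is exactly the critical point the paper exhibits.
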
 

\begin{proof} As Lemma \ref{sumlemma} provides an upper bound for $W_3(n)$ as a sum of less than $n$ summands, 
it suffices to prove that the largest of those summands is of exponential order 12.539547. In order to do that, 
we use Stirling's formula that states that $m! \sim (m/e)^m \sqrt{2\pi m}$, so $\lim_{m\rightarrow \infty } 
\sqrt[m]{m!}= m/e$. Let $w_3(n,k)$ denote the number of 3-stack sortable permutations $p$ of length $n$ so that
$s(p)$ has $k$ descents.  Setting $k=nx$, with $x\in (0,1]$, and applying Stirling's formula to each factor of the bound in 
Lemma \ref{sumlemma}, this leads to the equality 
 \begin{eqnarray*} g(x): & = & \lim_{n\rightarrow \infty} \sqrt[n]{w_3(n,xn)} \\
& = & 
\frac{(1+x)^{1+x}\cdot (2-x)^{2-x}\cdot (2-2x)^{2-2x}}{ x^x \cdot 
(1-x)^{1-x} \cdot (2x)^{2x}\cdot (2-2x)^{2-2x}\cdot (1-2x)^{1-2x}} \\
& = &  (1+x)\cdot (2-x)^{2-x} \cdot x^{-3x} \cdot (1-x)^{-1+x} \cdot (1-2x)^{2x-1} \cdot \left(\frac{x+1}{4} \right)^x.
 \end{eqnarray*}

The function $g$ takes its maximum when $g'(x)=0$, which occurs when 
\[x=\frac{1}{12} \cdot (27+12\cdot \sqrt{417})^{1/3}-\frac{13}{4\cdot (27+12 \cdot \sqrt{417})^{1/3}}+\frac{1}{4}
\approx 0.2883918927.\]
For that value of $x$, we get $g(x)=12.53296$, completing the proof. 
\end{proof}

\begin{center}  {\bf Acknowledgment}  \end{center}

The author's research is partially supported by Simons Collaboration Grant  421967.

\end{document}